\documentclass[11pt]{amsart}
\usepackage{amssymb,hyperref}

\newtheorem{theorem}{Theorem}[section]
\newtheorem{corollary}[theorem]{Corollary}

\newtheorem{lemma}[theorem]{Lemma}
\newtheorem{remark}[theorem]{Remark}

\theoremstyle{definition}
\newtheorem{definition}[theorem]{Definition}
\newtheorem{construction}[theorem]{Construction}

\newcommand{\Split}{\mathrm{Split}}

\newcommand{\PX}{\mathrm{PX}}
\newcommand{\cT}{\mathcal {T}}
\newcommand{\cF}{\mathcal {F}}

\def\soc{{\rm soc}}
\def\Aut{{\rm Aut}}

\def\Cay{{\rm Cay}}
\newcommand{\C}{\mathrm{C}}
\newcommand{\D}{\mathrm{D}}
\newcommand{\Q}{\mathbf{Q}}
\newcommand{\K}{\mathbf{K}}
\newcommand{\M}{\mathrm{M}}
\renewcommand{\S}{\mathrm{S}}
\newcommand{\V}{\mathrm{V}}
\newcommand{\ZZ}{\mathbb Z}

\newcommand{\cC}{\mathcal {C}}
\renewcommand{\wr}{\mathop{\mathrm{wr}}}

\def\cent#1#2{{\bf C}_{{#1}}({{#2}})}

\begin{document}

\title{Semiregular automorphisms of cubic vertex-transitive graphs} 

\author[J. Morris]{Joy Morris}
\address{Joy Morris, Department of Mathematics and Computer Science,
University of Lethbridge, Lethbridge, AB. T1K 3M4. Canada}
\email{joy@cs.uleth.ca}

\author[P. Spiga]{Pablo Spiga}
\address{Pablo Spiga,
Dipartimento di Matematica e Applicazioni, University of Milano-Bicocca, Via Cozzi 53, 20125 Milano, Italy}\email{pablo.spiga@unimib.it}

\author[G. Verret]{Gabriel Verret}
\address{Gabriel Verret, Centre for Mathematics of Symmetry and Computation, School of Mathematics and Statistics, The University of Western Australia, 35 Stirling Highway, Crawley, WA 6009, Australia. \newline
\indent Also affiliated with : UP FAMNIT, University of Primorska, Glagolja\v{s}ka 8, 6000 Koper, Slovenia.}\email{gabriel.verret@uwa.edu.au}
 
 \thanks{This work was supported in part by the first author's grant from the Natural Science and Engineering Research Council of Canada. The third author is supported by UWA as part of the Australian Research Council grant DE130101001. }
 
\subjclass[2010]{Primary 20B25, Secondary 05E18}
 \keywords{semiregular automorphism, normal quotient, cubic graph}
 
\begin{abstract}
We characterise connected cubic graphs admitting a vertex-transitive group of automorphisms with an abelian normal subgroup that is not semiregular. We illustrate the utility of this result by using it to prove that the order of a semiregular subgroup of maximum order in a vertex-transitive group of automorphisms of a connected cubic graph grows with the order of the graph, settling~\cite[Problem~6.3]{CGJKKMN}.
\end{abstract}

\maketitle

\section{Introduction}
All the graphs and groups considered in this paper are finite. A useful tool in the theory of group actions on graphs is the \emph{abelian normal quotient method}. This is used to study (and possibly classify) a family of pairs $(\Gamma,G)$  having certain additional properties, where $\Gamma$ is a finite graph and $G$ is a subgroup of the automorphism group $\Aut(\Gamma)$ of $\Gamma$. (For example, the family consisting of the pairs $(\Gamma,G)$ where $\Gamma$ is a finite $(G,s)$-arc-transitive graph, see~\cite{GLP}.)  To use this method, one  generally splits the analysis into three cases, as follows:

\begin{enumerate}
\item $G$ has no nontrivial abelian normal subgroups; \label{caseSS}
\item $G$ has an abelian normal subgroup that is not semiregular; \label{caseAbNSemi}
\item $G$ has an abelian normal subgroup that is  semiregular. \label{caseAbSemi}
\end{enumerate}

This method is inductive: cases~(\ref{caseSS}) and~(\ref{caseAbNSemi}) serve as the basis for the induction, while case~(\ref{caseAbSemi}) can be treated as a reduction. The abelian normal quotient method is a variant of the usual \emph{normal quotient method}, which already has an impressive pedigree (for example, see~\cite{GLP,GXu,PNQ,Praeger}). 

In the usual normal quotient method, one considers arbitrary normal subgroups rather than only abelian ones. Compared to this, the abelian variant trades a potentially more difficult basis of induction to obtain an easier reduction step. It seems that, in practice, this is often an advantageous trade-off and many recent papers have used this approach (see for example \cite{Dobson,Li,PSV4valent,genlost,8valent}).

We now give a few more details concerning this method. In case~(\ref{caseSS}), $G$ has trivial soluble radical. Such a group has some well-known properties: its socle is a direct product of nonabelian simple groups and the group acts faithfully on its socle by conjugation. In particular, in case~(\ref{caseSS}), the Classification of Finite Simple Groups can be brought to bear on the problem to obtain very detailed information.
 
Similarly, the situation in case~(\ref{caseAbNSemi}) is surprisingly restrictive and very strong results can often be proved under this  hypothesis. Consider, for example, the following theorem due to Praeger and Xu (the graphs which appear in the statement will be defined in Section~\ref{sec:PX}):

\begin{theorem}[{{\cite[Theorem~$1$]{PraegerXu}}}]\label{PXu}
Let $\Lambda$ be a connected $4$-valent $G$-arc-transitive graph. If $G$ has an abelian normal subgroup that is not semiregular then $\Lambda\cong \PX(2,r,s)$ for some $r\geq 3$ and $1\leq s\leq r-1$. 
\end{theorem}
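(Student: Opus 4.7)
The plan is to pick a vertex $v$ with $A_v \neq 1$, set $M := A_v$, and analyse the action of $M$ on the 4-element neighbourhood $\Lambda(v)$. Since $A$ is abelian, each element of $M$ commutes with every $b \in A$ and hence fixes $b(v)$, so $A_u = M$ for every $u$ in the $A$-orbit $vA$. Because $A$ is normal in $G$, the stabilizer $G_v$ normalizes $M$, so the set $\mathrm{Fix}(M) \cap \Lambda(v)$ is $G_v$-invariant. By arc-transitivity, $G_v$ is transitive on the 4-element set $\Lambda(v)$, so this subset is either empty or all of $\Lambda(v)$; by the same token, all $M$-orbits on $\Lambda(v)$ have a common size lying in $\{1,2,4\}$.

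The size-4 case (regular $M$-action on $\Lambda(v)$) is immediate to eliminate: for $w \in \Lambda(v)$, transitivity gives $w \in vA$, hence $A_w = M$, so $M$ fixes $w$, contradicting regularity. The size-1 case (pointwise fixation of $\Lambda(v)$ by $M$) requires a propagation argument: using $G_v$-invariance of $\mathrm{Fix}(M)$ together with arc-transitivity to transport $M$ along the arcs of $\Lambda$, one shows that $\mathrm{Fix}(M)$ contains every vertex reachable by a walk from $v$, hence $M = 1$ by connectedness, a contradiction. This propagation step is the subtlest point of the argument, as it requires careful tracking of how the subgroup $M \leq A$ compares with the stabilizers $A_w$ at far-away vertices $w$; this is the main obstacle. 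The only remaining possibility is that $M$ has two orbits of length 2 on $\Lambda(v)$, yielding a canonical $G$-invariant partition of the edge set of $\Lambda$ into two perfect matchings.

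Finally I would exploit this edge 2-colouring to construct the quotient $\Lambda/A$ and identify $\Lambda$ with a Praeger--Xu graph. The edge 2-colouring decomposes $\Lambda$ into two edge-disjoint 2-factors, and the $A$-orbits form natural fibres compatible with both. Passing to the quotient by $A$-orbits yields a 2-regular connected graph, necessarily a cycle $C_r$ for some $r \geq 3$; the fibre structure gives an elementary abelian 2-group action, whose rank $s$ satisfies $1 \leq s \leq r-1$ by non-semiregularity and connectedness. A direct comparison with the explicit construction of $\PX(2,r,s)$ to be given in Section~\ref{sec:PX} then identifies $\Lambda \cong \PX(2,r,s)$. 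The matching of the abstract block-over-cycle structure with the precise definition of $\PX(2,r,s)$, including pinning down the parameter $s$ in terms of $\log_2|M|$, is a secondary technical difficulty but is largely bookkeeping once the local structure has been established.
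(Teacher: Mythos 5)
First, a point of comparison: the paper does not prove this statement at all --- it is quoted from Praeger and Xu (\cite[Theorem~1]{PraegerXu}) and used as a black box --- so your proposal must stand entirely on its own. It does not, and the first failure is already in your local analysis. You claim the case where $M=A_v$ acts transitively on the $4$-set $\Lambda(v)$ is ``immediate to eliminate'' because ``transitivity gives $w\in v^A$''; but transitivity of $M$ on $\Lambda(v)$ only places $\Lambda(v)$ inside a \emph{single} $A$-orbit, not inside $v^A$. This case genuinely occurs: take $\Lambda=\K_{4,4}$, $G=\Sym(4)\wr\Sym(2)$ and $A=V\times V$ with $V$ the Klein four-group in $\Sym(4)$. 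Then $A$ is abelian, normal and not semiregular, $G$ is arc-transitive, and $A_v$ acts regularly on $\Lambda(v)$ (which lies in the $A$-orbit opposite to $v^A$). The theorem survives because $\K_{4,4}\cong\PX(2,4,1)$, but your argument derives a false contradiction; the correct treatment is to show (using that $A_v$ fixes $v^A$ pointwise, as in the $p=3$ case of the paper's proof of Theorem~\ref{CubicPXu}) that all neighbours of a vertex share the same neighbourhood, forcing $\Lambda\cong\K_{4,4}$, which must then be recognised as $\PX(2,4,1)$.

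The second and larger problem is that the step you call ``largely bookkeeping'' is the actual content of the theorem. Two $M$-orbits of length $2$ on $\Lambda(v)$ do not yield ``a canonical $G$-invariant partition of the edge set into two perfect matchings,'' nor even into two $G$-invariant $2$-factors: since $G_v$ is transitive on $\Lambda(v)$ and the two $M$-orbits form a block system for this action, $G_v$ interchanges the two blocks, so no $G$-invariant global $2$-colouring of edges exists (indeed, for $\PX(2,r,1)$ with $r$ odd, no consistent global $2$-colouring extending the local block structure exists at all). What one actually has is a $G$-invariant local pairing, from which one must still prove: that the quotient modulo $A$-orbits (or a suitable normal subgroup) is a cycle of length $r\geq 3$ rather than a graph on one or two vertices; that the kernel of the action on that cycle induces precisely $\ZZ_2$ on each fibre-pair and is elementary abelian of controlled rank; and that these data determine the graph up to isomorphism as $\PX(2,r,s)$ with $2^s=|\V(\Lambda)|/r$. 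That reconstruction is the substance of Praeger and Xu's several-page argument, and none of it is supplied by your outline.
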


Clearly, Theorem~\ref{PXu} is very useful when applying the abelian normal quotient method to $4$-valent arc-transitive graphs, as it deals with case~(\ref{caseAbNSemi}) as satisfactorily as one could hope for, that is, giving a complete classification of the possible graphs. (For examples of applications, see~\cite{PSV4valent,genlost, Verret2}.)

 One of our goals is to prove the following analogue of Theorem~\ref{PXu} for cubic vertex-transitive graphs (the graphs which appear in Theorem~\ref{CubicPXu} will be defined in Section~\ref{sec:PX}):

\begin{theorem}\label{CubicPXu}
Let $\Gamma$ be a connected cubic $G$-vertex-transitive graph.  If $G$ has an abelian normal subgroup that is not semiregular then $\Gamma$ is isomorphic to one of $\K_4$, $\K_{3,3}$, $\Q_3$ or $\S(\PX(2,r,s))$ for some $r\geq 3$ and $1\leq s\leq r-1$.
\end{theorem}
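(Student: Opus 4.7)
The plan is to reduce the principal case to Theorem~\ref{PXu} via an edge-contraction (merge) operation applied to a perfect matching arising from an involution in $A$. Let $A \trianglelefteq G$ be an abelian normal subgroup that is not semiregular, fix a vertex $v$ with $A_v \ne 1$, and set $K = A_v$. Since $A$ is abelian, every element of $K$ commutes with $A$ and hence fixes the $A$-orbit $Av$ pointwise. A careful local analysis, exploiting the cubic structure and connectivity, should show that $K$ acts faithfully on $N(v)$, so the abelian $K$ embeds into $\Sym(N(v)) \cong \mathrm{S}_3$, forcing $|K| \in \{2, 3\}$.

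If $|K| = 3$, then $K$ cycles $N(v)$ transitively, so the three neighbors of $v$ lie in one $A$-orbit $B'$. A coset argument (using that $A/K$ acts regularly on $Av$) shows $B' \ne Av$, so $\Gamma$ is bipartite with parts $Av$ and $B'$. Counting the size-$3$ $K$-orbits on $B'$ that arise as neighborhoods of vertices in $Av$ and applying the cubic degree constraint forces a triple of twin vertices sharing the same three neighbors, producing a $\K_{3,3}$-subgraph; connectivity then yields $\Gamma \cong \K_{3,3}$.

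The principal case is $|K| = 2$; let $k$ generate $K$. Then $k$ fixes $v$, fixes exactly one neighbor $u \in N(v)$, and swaps the other two. The key technical step --- and the main obstacle --- is to show that the edges of $\Gamma$ fixed by $k$ form a perfect matching $M$, equivalently that every non-fixed vertex $w$ satisfies $w \sim kw$. Once $M$ is in hand, I would form the merge graph $\Lambda = \M(\Gamma, M)$, which is $4$-valent. The quotient $\overline{G} = G/\langle k \rangle$ acts on $\Lambda$, and $\overline{A} = A/\langle k \rangle$ is an abelian normal subgroup of $\overline{G}$ that is not semiregular on $\Lambda$ (it stabilises the vertex of $\Lambda$ coming from the edge $\{v, u\}$). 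After upgrading vertex-transitivity to arc-transitivity via a local analysis at $v$, Theorem~\ref{PXu} applies and yields $\Lambda \cong \PX(2, r, s)$ for some $r \ge 3$ and $1 \le s \le r-1$, and splitting back gives $\Gamma \cong \S(\PX(2, r, s))$. The remaining exceptional graphs $\K_4$ and $\Q_3$ arise from degenerate subcases: $\Q_3$ when the perfect matching condition fails ($w \not\sim kw$ for some non-fixed $w$), and $\K_4$ when the merge produces a graph too small to lie in the $\PX$ family.

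The hard parts will be in the case $|K| = 2$: first, constructing the perfect matching $M$ of edges fixed by $k$ (and simultaneously isolating $\Q_3$ as the degeneracy); second, promoting vertex-transitivity of $\overline{G}$ on $\Lambda$ to arc-transitivity so that Theorem~\ref{PXu} is applicable. The case $|K| = 3$ and the identification of $\K_4$ as the small-merge exception are more routine once the neighborhood structure at $v$ is in place.
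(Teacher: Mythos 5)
Your overall strategy (reduce the local action of the abelian normal subgroup to order $2$ or $3$, dispose of the order-$3$ case as $\K_{3,3}$, and in the order-$2$ case contract a canonical perfect matching to a $4$-valent arc-transitive graph and invoke Theorem~\ref{PXu}) is the same as the paper's, but the step you yourself identify as the crux rests on a false statement, and two other steps fail as written. First, $K=A_v$ need not act faithfully on $\Gamma(v)$: in $\S(\PX(2,r,s))$ with $A=\ZZ_2^r$ the stabiliser $A_v\cong\ZZ_2^{r-s}$ acts on $\Gamma(v)$ through a quotient of order $2$, so $|K|$ is unbounded. The correct reduction is to the local group $A_v^{\Gamma(v)}\leq\Sym(3)$, which forces $p\in\{2,3\}$ and, when $p=2$, forces $|G_v^{\Gamma(v)}|=2$. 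In particular $\langle k\rangle$ is not normal in $G$, so the quotient $\overline{G}=G/\langle k\rangle$ you want to act on $\Lambda$ does not exist; one must let $G$ itself act on the merged graph. Second, and more seriously, your proposed matching $\{\{w,w^k\}\mid w\neq w^k\}$ is \emph{empty} in the target examples: for $k$ in the normal subgroup $\ZZ_2^r$ of $\Aut(\S(\PX(2,r,s)))$, every vertex $w$ has the same $\ZZ_r$-coordinate and the same sign as $w^k$, and no two such vertices are adjacent, so $w\not\sim w^k$ always. The matching that actually works is the $G$-orbit $\cT$ of the edge $\{v,v'\}$, where $v'$ is the unique neighbour of $v$ fixed by $G_v$ (which exists precisely because $|G_v^{\Gamma(v)}|=2$); this is one of the two $G$-orbits on edges, it is a perfect matching, and contracting it yields the $4$-valent $G$-arc-transitive merge graph.

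Even with the right matching, two further ingredients are missing. You must show that $\Gamma-\cT$ is a disjoint union of cycles of length exactly $4$: this is where the abelianness of $A$ enters a second time (the stabilisers in $A$ of two adjacent vertices of such a cycle induce reflections generating a dihedral group on the cycle, which must be abelian, forcing length $4$). And after Theorem~\ref{PXu} gives $\M(\Gamma,G)\cong\PX(2,r,s)$, you must show that the induced arc-transitive cycle decomposition into $4$-cycles is conjugate to the natural one (Lemma~\ref{boring} in the paper); otherwise splitting back could a priori produce a cubic graph other than $\S(\PX(2,r,s))$. Finally, $\K_4$, $\Q_3$ and a second occurrence of $\K_{3,3}$ arise as the circular- and M\"obius-ladder graphs, i.e.\ exactly the degenerate cases in which the merge construction would produce multiple edges; they are not detected by a failure of the matching to be perfect, as you suggest for $\Q_3$.
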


Much like Theorem~\ref{PXu} with respect to $4$-valent arc-transitive graphs, Theorem~\ref{CubicPXu} will be very useful when applying the abelian normal quotient method to cubic vertex-transitive graphs. To illustrate this usefulness, we prove the following:

\begin{theorem}\label{theorem:main1}
There exists a function $f:\mathbb{N}\to \mathbb{N}$ satisfying $f(n)\rightarrow \infty$ as $n\rightarrow\infty$ such that, if $\Gamma$ is a connected $G$-vertex-transitive cubic graph of order $n$ then $G$ contains a semiregular subgroup of order at least $f(n)$.
\end{theorem}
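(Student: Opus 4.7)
The plan is to run strong induction on $n$, following the trichotomy~(\ref{caseSS})--(\ref{caseAbSemi}) recalled in the introduction and invoking Theorem~\ref{CubicPXu} as the engine of case~(\ref{caseAbNSemi}). Let $(\Gamma,G)$ be a connected cubic $G$-vertex-transitive graph of order $n$; we construct a semiregular subgroup of $G$ of order at least $f(n)$, for some function $f\to\infty$ (for concreteness, growing at the rate of an iterated logarithm).

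In case~(\ref{caseSS}), $\soc(G)$ is a direct product of nonabelian simple groups and $G\le\Aut(\soc(G))$; the vertex-transitive action of such a $G$ on a cubic graph is highly restricted (the stabiliser $G_v$ is a $\{2,3\}$-group of tightly controlled structure), and CFSG-based analysis of the possible socles, of the kind surveyed in~\cite{CGJKKMN}, produces a semiregular subgroup of $G$ of order growing with $n$. In case~(\ref{caseAbNSemi}), Theorem~\ref{CubicPXu} restricts $\Gamma$ either to the finite list $\{\K_4,\K_{3,3},\Q_3\}$---which contributes only finitely many values of $n$ and is absorbed into $f$ on a bounded initial range---or to $\S(\PX(2,r,s))$. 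In the latter case, one reads off from the construction of $\PX(2,r,s)$ in Section~\ref{sec:PX} an explicit cyclic semiregular subgroup of order proportional to $r$, and checks that $r\to\infty$ as $n=3r\cdot 2^s\to\infty$ once $s$ is bounded via the cubic structure.

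Case~(\ref{caseAbSemi}) is the inductive step. Pick a nontrivial abelian normal subgroup $N$ of $G$, which is then semiregular. If $|N|\ge f(n)$ we are done; otherwise pass to $\Gamma/N$ under the induced vertex-transitive action of $G/N$. The key lifting fact is that whenever $\bar H\le G/N$ acts semiregularly on $\Gamma/N$, its full preimage $H\le G$ acts semiregularly on $\Gamma$: for $v\in V(\Gamma)$ with image $[v]\in V(\Gamma/N)$ we have $H_v\le H_{[v]}=N$ (since $\bar H_{[v]}=1$) and $H_v\le N_v=1$ (since $N$ is semiregular), so $H_v=1$. Applying induction to $(\Gamma/N,G/N)$ yields such $\bar H$ of order at least $f(n/|N|)$, and therefore a semiregular subgroup of $G$ of order at least $|N|\cdot f(n/|N|)$; defining $f$ to grow slowly enough makes this exceed $f(n)$ whenever $|N|\ge 2$, closing the induction. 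The main obstacle is ensuring that $\Gamma/N$ is again a connected cubic $(G/N)$-vertex-transitive graph rather than a multigraph of lower effective valency; this is addressed by choosing $N$ minimal and handling the small number of exceptional collapses directly.
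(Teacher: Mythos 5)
Your overall architecture (the trichotomy plus Theorem~\ref{CubicPXu}) matches the paper's, although the paper argues by contradiction against a uniform bound $c$ rather than by induction on $n$; that difference is mostly cosmetic. The genuine gap is in your case~(\ref{caseAbSemi}). You propose to pass to $\Gamma/N$ and apply the inductive hypothesis, dismissing the possibility that $\Gamma/N$ fails to be cubic as a ``small number of exceptional collapses'' handled by taking $N$ minimal. This is backwards: after the standard first reduction (quotienting by a maximal normal subgroup that preserves valency $3$), one may assume that \emph{every} nontrivial normal quotient has valency less than $3$, so for a minimal abelian normal subgroup $N$ with at least three orbits the quotient $\Gamma/N$ is a \emph{cycle}. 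This degenerate case is the generic one, the inductive hypothesis does not apply to it, and your lifting lemma does not rescue it: the large rotation of the cycle lives in $G/K$, where $K$ is the kernel of the action on $N$-orbits, and $K$ may properly contain $N$ and need not be semiregular, so a preimage of the rotation can have fixed points. The paper's actual work here is to prove $\cent{K}{N}=N$ (via the commutator argument applied to the centraliser of $N$ in $K$) and then to use $|G:\cent{G}{N}|\le|\Aut(N)|\le |N|!$ to locate an $\ell$-step rotation inside $\cent{G}{N}$ with $\ell$ bounded; its $\ell$-th power lies in $K\cap\cent{G}{N}=N$ and is therefore semiregular, hence so is the element itself. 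None of this appears in your proposal, and without it the inductive step fails.

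Two smaller points. In case~(\ref{caseSS}) you assert the conclusion rather than prove it; the paper needs two specific inputs: Burnside's theorem gives a prime $p\ge 5$ dividing each simple factor of the socle, whence (since $G_v$ is a $\{2,3\}$-group) a semiregular subgroup of order at least $5^{t}$ bounding the number $t$ of factors, and a CFSG-based lemma guaranteeing elements of large order coprime to $6$ in large simple groups, bounding the order of each factor. In case~(\ref{caseAbNSemi}) your vertex count $n=3r\cdot 2^s$ is wrong (in fact $|\V(\S(\PX(2,r,s)))|=2^{s+1}r$), the parameter $s$ is \emph{not} bounded (it can be as large as $r-1$; rather, the inequality $s\le r-1$ is what forces $r$ to grow at least logarithmically in $n$), and you must produce the semiregular element inside the given vertex-transitive group $G$, not merely inside $\Aut(\Gamma)$ --- this is exactly what Lemma~\ref{AutSplit} and Corollary~\ref{PX case} are for.
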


Theorem~\ref{theorem:main1} settles positively  the conjecture posed in~\cite[Problem~6.3]{CGJKKMN}. Note that, contrary to what is claimed in the statement of~\cite[Problem~6.3]{CGJKKMN}, the conjecture in~\cite[Problem BCC~17.12]{Cameron} (which also appeared in~\cite{CSS} as Conjecture~$2$) is actually stronger. Namely,~\cite[Problem BCC~17.12]{Cameron} strengthens~\cite[Problem~6.3]{CGJKKMN} by considering  only cyclic semiregular subgroups. 

Despite the fact that~\cite[Problem~6.3]{CGJKKMN} has a positive solution,~\cite[Problem BCC~17.12]{Cameron} was recently shown to be false  by the second author~\cite{SpigaCubic}. Note also that Theorem~\ref{theorem:main1} has appeared previously in~\cite{Li}, however the proof in that paper contains a critical mistake (in the proof of Claim~2, on the last page).

\begin{remark}{\rm In our proof
of Theorem~\ref{theorem:main1} we do not make any effort to optimise or even keep track of the most rapidly growing function $f$ satisfying the hypothesis. Our current proof shows that $f(n)$ can be taken to be $\log(\log(n))$. However, we believe this is far from best possible. In fact, we conjecture that there exists a constant $c>0$ such that $f(n)$ can be taken to be $n^{c}$. In some sense, this is best possible as it was shown in~\cite{CSS} that $f(n)\leq n^{1/3}$, for infinitely many values of $n$.}
\end{remark}

The notation used throughout this paper is standard. If $\Gamma$ is a graph and $G$ $\leq\Aut(\Gamma)$, we say that $\Gamma$ is \textit{$G$-vertex-transitive} (respectively, \textit{$G$-arc-transitive}) if $G$ acts transitively on the vertices (respectively, arcs) of $\Gamma$. If $v$ is a vertex of $\Gamma$,  the neighbourhood of $v$ is denoted by $\Gamma(v)$, the stabiliser of $v$ in $G$ is denoted by $G_v$ and $G_v^{\Gamma(v)}$ denotes the permutation group induced by $G_v$ in its action on $\Gamma(v)$.

Let $\Gamma$ be a $G$-vertex-transitive graph and let $N$ be a normal subgroup of $G$. For every vertex $v$, the $N$-orbit containing $v$ is denoted by $v^N$. The \emph{normal quotient graph} $\Gamma/N$ has the $N$-orbits on $\V(\Gamma)$ as vertices, with an edge between distinct vertices $v^N$ and $w^N$ if and only if there is an edge of $\Gamma$ between $v'$ and $w'$, for some $v' \in v^N$ and some $w' \in w^N$. Note that $G$ has an induced transitive action on the vertices of $\Gamma/N$. Moreover, it is easily seen that the valency of $\Gamma/N$ is less or equal to the valency of $\Gamma$.

The dihedral group of order $2r$ is denoted by $\D_r$. It is usually viewed as a permutation group of degree $r$ in the natural way.

The remainder of our paper is divided as follows:  in Section~\ref{sec:PX}, we define the graphs which appear in Theorems~\ref{PXu} and~\ref{CubicPXu}, prove some useful results about them, and prove Theorem~\ref{CubicPXu}.  Theorem~\ref{theorem:main1} is proved in Section~\ref{sec:last}.

\section{Praeger-Xu graphs and their split graphs}\label{sec:PX}
We first define the graphs $\PX(2,r,s)$ and prove some useful results about them.

\begin{definition}
Let $r$ and $s$ be positive integers with $r\geq 3$ and $1\leq s\leq r-1$. The graph $\PX(2,r,s)$ has vertex-set $\ZZ_2^s\times\ZZ_r$ and edge-set $\{\{(n_0,n_1,\ldots,n_{s-1},x),(n_1,\ldots,n_{s-1},n_{s},x+1)\}\mid n_i\in \ZZ_2,x\in\ZZ_r\}$. 
\end{definition}

Here is another description of these graphs that is more geometric and sometimes easier to work with. First, the graph $\PX(2, r, 1)$ is the lexicographic product of a cycle of length $r$ and an edgeless graph on two vertices. In other words, $\V(\PX(2, r, 1)) = \ZZ_2 \times \ZZ_r$ with $(u, x)$ being adjacent to $(v, y)$ if and only if $x - y \in \{-1, 1\}$. Next, a path in $\PX(2, r, 1)$ is called \emph{traversing} if it contains at most one vertex from $\ZZ_2 \times \{y\}$, for each $y \in \ZZ_r$. Finally, for $s \geq 2$, the graph $\PX(2, r, s)$ has vertex-set the set of traversing paths of $\PX(2, r, 1)$ of length $s - 1$, with two such paths being adjacent in $\PX(2, r, s)$ if and only if their union is a traversing path of length $s$ in $\PX(2, r, 1)$. 

It is not hard to see that this is equivalent to the original definition and that $\PX(2,r,s)$ is a connected $4$-valent graph with $r2^s$ vertices. Observe that there is a natural action of the wreath product $W:=\ZZ_2 \wr \D_r=\ZZ_2^r\rtimes\D_r$ as a group of automorphisms of $\PX(2, r, 1)$ with an induced faithful arc-transitive action on $\PX(2, r, s)$, for every $s$. Specifically, $W$ acts on $\V(\PX(2,r,s))=\ZZ_2^s\times\ZZ_r$ in the following way: for $g=(g_0,\ldots,g_{r-1},h)\in W$ (with $g_0,\ldots,g_{r-1}\in\ZZ_2$ and $h\in\D_r$), we have
$$(n_0,n_1,\ldots,n_{s-1},x)^g=(n_0^{g_x},n_1^{g_{x+1}},\ldots,n_{s-1}^{g_{x+s-1}},x^h), $$
where the indices are taken modulo $r$. We will also need the concept of an arc-transitive cycle decomposition, which was studied in some detail in~\cite{CycleDec}.

\begin{definition}\label{def:cycledec}
A \emph{cycle} in a graph is a connected regular subgraph of valency 2. A \emph{cycle decomposition} $\cC$ of a graph $\Lambda$  is a set of cycles in $\Lambda$ such that each edge of $\Lambda$  belongs to exactly one cycle in $\cC$. If there exists an arc-transitive group of automorphisms of $\Lambda$ that maps every cycle of $\cC$ to a cycle in $\cC$ then $\cC$ will be called \emph{arc-transitive}.
\end{definition}

\begin{construction}[{{\cite[Construction~$11$]{PSV1280}}}]\label{cons:split}
The input of this construction is a pair $(\Lambda,\cC)$, where $\Lambda$ is a $4$-valent graph and $\cC$ is an arc-transitive cycle decomposition of $\Lambda$. The output is the graph $\Split(\Lambda,\cC)$, the vertices of which are the pairs $(v,C)$ where $v\in\V(\Lambda)$, $C\in\cC$ and $v$ lies on the cycle $C$, and two vertices $(v_1,C_1)$ and $(v_2,C_2)$ are adjacent if and only if either $C_1\neq C_2$ and $v_1=v_2$, or $C_1=C_2$ and $\{v_1,v_2\}$ is an edge of $C_1$.
\end{construction}

Note that $\Split(\Lambda,\cC)$ is a cubic graph. We now consider a very important cycle decomposition of $\PX(2,r,s)$:

\begin{definition}\label{defdef}
Let $\underline{n}=(n_1,\ldots,n_{s-1})\in \ZZ_2^{s-1}$, let $x\in \ZZ_r$ and let $C_{\underline{n},x}$ be the cycle of length four of $\PX(2,r,s)$ given by 
$$((0,\underline{n},x),(\underline{n},0,x+1),(1,\underline{n},x),(\underline{n},1,x+1)).$$
Then $\cC:=\{C_{\underline{n},x}\mid \underline{n}\in \ZZ_2^{s-1},x\in\ZZ_r\}$ is a cycle decomposition of $\PX(2,r,s)$ into cycles of length four called the \emph{natural} cycle decomposition of $\PX(2,r,s)$. 
As the arc-transitive action of $\ZZ_2\wr \D_r$ on $\PX(2,r,s)$ induces a transitive action on $\cC$, we see that $\cC$ is arc-transitive.
The graph $\Split(\PX(2,r,s),\cC)$ is simply denoted by $\S(\PX(2,r,s))$.

It is not hard to see that the graph $\S(\PX(2,r,s))$ can also be described in the following way: its vertex-set is $\ZZ_2^s\times\ZZ_r\times\{+,-\}$ and its edge-set is $$\{\{(n_0,\ldots,n_{s-1},x,+),(n_1,\ldots,n_{s},x+1,-)\}\mid n_i\in \ZZ_2,x\in\ZZ_r\}~\cup\qquad$$
$$\qquad \{\{(n_0,\ldots,n_{s-1},x,+),(n_0,\ldots,n_{s-1},x,-)\}\mid n_i\in \ZZ_2,x\in\ZZ_r\}.$$
Observe that the wreath product $W:=\ZZ_2\wr\D_r=\ZZ_2^r\rtimes\D_r$ has a faithful action on $\V(\Gamma)=\ZZ_2^s\times\ZZ_r\times\{+,-\}$. Namely, for $g=(g_0,\ldots,g_{r-1},h)\in W$ (with $g_0,\ldots,g_{s-1}\in \ZZ_2$ and $h\in \D_r$), we have 
$$(n_0,n_1,\ldots,n_{s-1},x,\pm)^g=\begin{cases}(n_0^{g_x},n_1^{g_{x+1}},\ldots,n_{s-1}^{g_{x+s-1}},x^h,\pm) \textrm{ if } h\in\ZZ_r,\\
(n_0^{g_x},n_1^{g_{x+1}},\ldots,n_{s-1}^{g_{x+s-1}},x^h,\mp) \textrm{ otherwise,}\end{cases}$$
where the indices are taken modulo $r$. It is easy to check that $W$ is a vertex-transitive group of automorphisms of $\S(\PX(p,r,s))$.
\end{definition}

The graphs $\S(\PX(2,r,s))$ have appeared before in the literature, see for example~\cite[Section~3]{Dobson} and~\cite[Corollary~1.5]{PSV4valent}. In fact, most of the effort in~\cite{Dobson} is spent proving a variant of Theorem~\ref{CubicPXu}. It seems the authors were unaware of Theorem~\ref{PXu}, which might have made their work easier.

\begin{lemma}\label{boring}
Up to conjugacy in $\Aut(\PX(2,r,s))$, the natural cycle decomposition of $\PX(2,r,s)$ is the unique arc-transitive cycle decomposition of $\PX(2,r,s)$ into cycles of length four.
\end{lemma}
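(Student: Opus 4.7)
The strategy is to enumerate the $4$-cycles of $\PX:=\PX(2,r,s)$ and thereby pin down all possible $4$-cycle decompositions. Any $4$-cycle in $\PX$ projects, via $(n_0,\ldots,n_{s-1},x)\mapsto x$, to a closed walk of length $4$ in the base cycle of length $r$, each step changing the coordinate by $\pm 1$. The signed step-sum must be $0\pmod r$, so either the walk has two $+1$-steps and two $-1$-steps, or, only when $r\mid 4$, all four steps are identical.

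In the two-and-two case the three cyclic patterns are UDUD, UUDD, and UDDU. Writing out coordinates using the adjacency rule of $\PX$---where an ``up'' step from $(c_0,\ldots,c_{s-1},x)$ produces a neighbour of the form $(c_1,\ldots,c_{s-1},a,x+1)$ for some $a\in\ZZ_2$---one finds that for $s\geq 2$ the UUDD and UDDU patterns degenerate, forcing two of the four would-be cycle-vertices to have identical coordinates. The UDUD cycles are precisely the natural cycles $C_{\underline n,x}$ of $\cC$. Hence for $s\geq 2$ and $r\geq 5$ every $4$-cycle of $\PX$ is natural, and any $4$-cycle decomposition must equal $\cC$; this handles the lemma in that range. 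Note that since $\cC'$ is preserved by an arc-transitive subgroup $H\leq\Aut(\PX)$, the pairing of $\Gamma(v)$ induced by $\cC'$ at each vertex $v$ is $H_v$-invariant, but in this generic range the pairing is forced by the scarcity of $4$-cycles, so this extra structure is not yet needed.

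The remaining parameter ranges are $s=1$ (where $\PX(2,r,1)\cong C_r[2K_1]$) and $r\in\{3,4\}$. For $s=1$, the automorphism group of $C_r[2K_1]$ strictly contains $\ZZ_2\wr\D_r$, and the extra automorphisms---notably those swapping the two vertices within individual fibres---allow one to conjugate any arc-transitive $4$-cycle decomposition back to $\cC$. For $r=4$ one must also account for ``wrap-around'' $4$-cycles of UUUU type; a direct computation, using arc-transitivity to analyse the pairing at a fixed vertex, verifies that an arc-transitive $4$-cycle decomposition involving such cycles is $\Aut(\PX(2,4,s))$-conjugate to $\cC$. The case $r=3$ is dispatched by an elementary analysis of the small graphs $\PX(2,3,1)$ and $\PX(2,3,2)$. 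The main obstacle is precisely this case-analysis for the small-parameter exceptional cases, where non-natural $4$-cycles exist and the conjugating automorphism must be produced explicitly.
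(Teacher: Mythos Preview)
Your generic-case argument is correct and in fact stronger than the paper's: for $s\geq 2$ and $r\neq 4$ the coordinate computation really does show that every $4$-cycle of $\PX(2,r,s)$ is a natural one, so \emph{any} $4$-cycle decomposition (arc-transitive or not) equals $\cC$. The paper instead uses arc-transitivity even here, producing a one-step rotation $g\in\Aut(\PX)=W$ of a hypothetical cycle with $|\pi(C)|\geq 3$ and reading off a contradiction from the $\D_r$-component of $g$.

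The genuine gap is your treatment of $s=1$. You assert that $\Aut(C_r[2K_1])$ strictly contains $\ZZ_2\wr\D_r$ and plan to use the ``extra'' automorphisms to conjugate. Both parts of this are wrong. By \cite[Theorem~2.13]{PraegerXu} one has $\Aut(\PX(2,r,1))=\ZZ_2\wr\D_r$ for every $r\neq 4$, so there are no extra automorphisms; and the fibre-swaps you mention are precisely the base group $\ZZ_2^r$, already present in $\ZZ_2\wr\D_r$. Meanwhile, for $s=1$ the UUDD pattern does \emph{not} degenerate: $C_r[2K_1]$ has plenty of non-natural $4$-cycles (for instance $((0,x),(0,x+1),(1,x),(1,x+1))$ is natural, but $((0,x),(0,x+1),(0,x+2),(1,x+1))$ is a genuine $4$-cycle as well). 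Thus for $s=1$ one cannot conclude by enumeration alone; the arc-transitivity hypothesis is essential. The paper's rotation argument is exactly what is needed: an arc-transitive group preserving $\cC'$ contains an element acting as a one-step rotation on any $C\in\cC'$, and if $|\pi(C)|\geq 3$ this forces the $\D_r$-component to be a full rotation of $\ZZ_r$, impossible for $r\neq 4$ since $C$ has only four vertices. You should replace your $s=1$ paragraph with this argument (or an equivalent direct analysis showing that no arc-transitive subgroup of $\ZZ_2\wr\D_r$ can preserve a decomposition containing a UUDD cycle).
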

\begin{proof}
Let $\Lambda=\PX(2,r,s)$, let $W=\ZZ_2^r\rtimes\D_r$ and
let $\cC$ be an arbitrary arc-transitive cycle decomposition of $\Lambda$ into cycles of length four. We show that $\cC$ is conjugate to the natural cycle decomposition of $\Lambda$ under $\Aut(\Lambda)$. 

Suppose first that $r\neq 4$. In this case, we actually prove that $\cC$ is the natural cycle decomposition. By~\cite[Theorem~2.13]{PraegerXu}, we have $\Aut(\Lambda)=W$. Let $\pi$ be the canonical projection from $\V(\Lambda)=\ZZ_2^s\times\ZZ_r$ to $\ZZ_r$. 

Suppose that, for every $C\in\cC$, we have $|\pi(C)|= 2$. Let $C\in\cC$ and write $C=(v_0,v_1,v_2,v_3)$ with $v_0,v_1,v_2,v_3\in \V(\Lambda)$. Then $\pi(C)=\{x,x+1\}$ for some $x\in \ZZ_r$ and, replacing $(v_0,v_1,v_2,v_3)$ by $(v_1,v_2,v_3,v_0)$ if necessary, we may assume that $\pi(v_0)=\pi(v_2)=x$ and $\pi(v_1)=\pi(v_3)=x+1$. Thus $v_0=(n_0,n_1,\ldots,n_{s-1},x)$, $v_1=(n_1,n_2,\ldots,n_{s},x+1)$, $v_2=(1-n_0,n_1,\ldots,n_{s-1},x)$ and $v_3=(n_1,\ldots,n_{s-1},1-n_s,x+1)$, for some $n_0,\ldots,n_s\in\ZZ_2$. Replacing $(v_0,v_1,v_2,v_3)$ by $(v_2,v_3,v_0,v_1)$ and $(v_0,v_1,v_2,v_3)$ by $(v_0,v_3,v_2,v_1)$ if necessary, we may assume that $n_0=n_s=0$.  Thus $C=C_{\underline{n},x}$ where $\underline{n}=(n_1,\ldots,n_{s-1})$. Since $C$ is an arbitrary element of $\cC$ we have shown that $\cC$ is the natural cycle decomposition of $\Lambda$.

Suppose now that we have $|\pi(C)|\geq 3$ for some $C\in\cC$. In particular, $C$ contains a $2$-path $P$ such that $\pi(P)=(x,x+1,x+2)$ for some $x\in\ZZ_r$. Since $\cC$ is preserved by an arc-transitive group of automorphisms of $\Lambda$, there exists $g\in\Aut(\Lambda)$ such that $g$ acts on $C$ as a one-step rotation. As $\Aut(\Lambda)=W$, we have $g=(g_0,\ldots,g_{r-1},h)$, for some $g_0,\ldots,g_ {r-1}\in \ZZ_2$ and $h\in \D_r$. Up to replacing $g$ by its inverse, we may assume that $\pi(P^g)=(x+1,x+2,x+3)$. In particular, $h$ has order $r$. Since $C$ is a $4$-cycle and $r\neq 4$, this is a contradiction.

If $r=4$ then $1\leq s\leq 3$ and there are only three graphs to consider: $\PX(2,4,1)$, $\PX(2,4,2)$ and $\PX(2,4,3)$. The statement can then be checked case-by-case, either by hand or with the assistance of a computer.
\end{proof}

Let $\K_4$ denote the complete graph on $4$ vertices, $\K_{3,3}$ the complete bipartite graph with parts of size $3$ and $\Q_3$ the $3$-cube. We now prove Theorem~\ref{CubicPXu}, which we restate for convenience.

\smallskip

\noindent\textbf{Theorem~\ref{CubicPXu}.}\emph{
Let $\Gamma$ be a connected cubic $G$-vertex-transitive graph.  If $G$ has an abelian normal subgroup that is not semiregular then $\Gamma$ is isomorphic to one of $\K_4$, $\K_{3,3}$, $\Q_3$ or $\S(\PX(2,r,s))$ for some $r\geq 3$ and $1\leq s\leq r-1$.}
\begin{proof}
Let $v\in\V(\Gamma)$, let $N$ be an abelian normal subgroup of $G$ that is not semiregular and let $p$ be a prime dividing $|N_v|$. Note that the subgroup of $N$ generated by the elements of order $p$ is elementary abelian, is not semiregular and is characteristic in $N$, and thus normal in $G$. In particular, replacing $N$ by this subgroup, we may assume that $N$ is an elementary abelian $p$-group. Note also that as $N$ is abelian and not semiregular, $N$ is intransitive. Furthermore, since $\Gamma$ is cubic and connected, $G_v$ is a $\{2,3\}$-group, and hence $p\in\{2,3\}$.

Suppose that $p=3$. Since $N$ is not semiregular, we have $N_v \neq 1$ hence $|N_v^{\Gamma(v)}|$ is divisible by $3$ and therefore $N_v^{\Gamma(v)}$ is transitive. Let $u\in \Gamma(v)$.  
Since $G$ is transitive on $\V(\Gamma)$, $N_u^{\Gamma(u)}$ is transitive hence every neighbour of $u$ is in $v^N$. Thus every vertex at distance $2$ from $v$ is in $v^N$. As $N$ is abelian, $N_v$ fixes $v^N$ pointwise and, since $N_v^{\Gamma(v)}$ is transitive, this implies that every neighbour of $v$ has the same neighbourhood. Therefore $\Gamma\cong\K_{3,3}$.

Suppose that $p=2$. Since $N$ is not semiregular, we have $N_v\neq 1$ and hence $|N_v^{\Gamma(v)}|=2$. Since $N_v^{\Gamma(v)}$ is normal in $G_v^{\Gamma(v)}$ this implies that $|G_v^{\Gamma(v)}|=2$. In particular, $v$ has a unique neighbour $v'$ such that $G_v=G_{v'}$. It easily follows that $G$ has two orbits on edges, one of which is $\cT:=\{\{v^g,(v')^g\}\mid g\in G\}$. Note that $\cT$ is a perfect matching of $\Gamma$ and that removing  $\cT$ from the edges of  $\Gamma$ leaves a union of pairwise disjoint cycles of the same length, say $k$. 

Let $u\in \Gamma(v)$ with $u\neq v'$, let $C$ be the cycle of $\Gamma-\cT$ containing $u$ and $v$, and observe that $C$ is a block of imprimitivity for $G$ and hence also for $N$. Note that $N_u$ and $N_v$ act on $C$ as reflections fixing adjacent vertices. Therefore $\langle N_v,N_u\rangle$ fixes $C$ setwise, and the permutation group induced by $\langle N_v,N_u\rangle$ on $C$ is either $\D_k$ (when $k$ is odd) or $\D_{k/2}$ (when $k$ is even). Since $N$ is abelian, it follows that $k=4$.

Suppose that $\Gamma$ is a circular ladder graph, that is, $\Gamma$ is isomorphic to the Cartesian product of a cycle of length $n\geq 3$ with a complete graph on $2$ vertices. If $n=4$ then $\Gamma\cong\Q_3$. We thus assume that $n\neq 4$. In particular, some edges are contained in a unique $4$-cycle while others are contained in more than one $4$-cycle. Call the latter \emph{rungs}. Since $G$ has two orbits on edges and the rungs form a perfect matching, $\cT$ must be the set of rungs. This implies that $\Gamma-\cT$ consists of two cycles of length $n$, contradicting the fact that $k=4$.

Suppose now that $\Gamma$ is a M\"{o}bius ladder graph, that is, $\Gamma$ is isomorphic to the Cayley graph $\Cay(\ZZ_{2n},\{1,-1,n\})$ for some $n\geq 2$. If $n=2$ then $\Gamma\cong\K_4$ and if $n=3$ then $\Gamma\cong\K_{3,3}$.  We thus assume that $n\geq 4$ and the same argument as in the last paragraph yields again that $\cT$ is the set of edges that are contained in more than one $4$-cycle. The removal of these leaves a cycle of length $2n$, which is a contradiction.

We may thus assume that $\Gamma$ is neither a circular ladder nor a M\"{o}bius ladder graph.  From now on, we adopt the terminology of~\cite[Section~4.1]{PSV1280}. By~\cite[Lemma~9]{PSV1280}, it follows that $(\Gamma,G)$ is non-degenerate (that is, for any two edges $\{u,u'\}$ and $\{v,v'\}$ in $\cT$, there is at most one edge of $\Gamma$ between $\{u,u'\}$ and $\{v,v'\}$).

Let $\M(\Gamma,G)$ and $\cC(\Gamma,G)$ be as in~\cite[Construction~7]{PSV1280} (that is, $\M(\Gamma,G)$ is the (not necessarily normal) quotient graph of $\Gamma$ with respect to the vertex-partition $\cT$ and $\cC(\Gamma,G)$ is the image of the cycle decomposition of $\Gamma-\cT$ under the canonical projection to $\M(\Gamma,G)$). By~\cite[Theorem~10]{PSV1280}, $\M(\Gamma,G)$ is a connected $4$-valent $G$-arc-transitive graph and $\cC(\Gamma,G)$ is an arc-transitive cycle decomposition of $\M(\Gamma,G)$ consisting of cycles of length $k=4$. Moreover, by~\cite[Theorem~12]{PSV1280}, $\Gamma\cong\Split(\M(\Gamma,G),\cC(\Gamma,G))$.

Note that $1<N_v\leq N_{\{v,v'\}}$ and thus $N$ is not semiregular on $\M(\Gamma,G)$. By Theorem~\ref{PXu}, $\M(\Gamma,G)\cong\PX(2,r,s)$ for some $r\geq 3$ and $1\leq s\leq r-1$. By Lemma~\ref{boring}, $\cC(\Gamma,G)$ is conjugate to the natural  cycle decomposition of $\M(\Gamma,G)$ under $\Aut(\M(\Gamma,G))$. It follows that $\Split(\M(\Gamma,G),\cC(\Gamma,G))\cong\S(\PX(2,r,s))$, which completes the proof.
\end{proof}

The remaining results in this section are observations about the automorphism group of $\S(\PX(2,r,s))$. They will be useful in the proof of Theorem~\ref{theorem:main1}.

\begin{lemma}\label{AutSplit}
Let $r\geq 5$ and let $1\leq s\leq r-1$. Then $\Aut(\S(\PX(2,r,s))=\ZZ_2^r\rtimes\D_r$ with the permutation representation given in Definition~$\ref{defdef}$.
\end{lemma}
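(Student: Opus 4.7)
The plan is to recover the Praeger-Xu graph $\Lambda := \PX(2,r,s)$ from $\Gamma := \S(\PX(2,r,s))$ in an $\Aut(\Gamma)$-invariant way and then transfer the automorphism-group calculation via \cite[Theorem~2.13]{PraegerXu} (the same reference invoked in the proof of Lemma~\ref{boring}), which gives $\Aut(\Lambda)=W$ for $r\geq 5$. The key object is the canonical perfect matching $\cT := \{\{(v,+),(v,-)\}\mid v\in\V(\Lambda)\}$ coming from the $\Split$ construction.

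I would first show that $\cT$ is preserved by $\Aut(\Gamma)$ by characterising it as the set of edges of $\Gamma$ lying in no $4$-cycle. This is a local calculation using Definition~\ref{defdef}. Every cycle-type edge $\{(v,+),(w,-)\}$ lies in a natural $4$-cycle of $\Gamma$ coming from the cycle of $\cC$ through $\{v,w\}$. On the other hand, for a matching edge $\{(v,+),(v,-)\}$ with $v$ at position $x$, any $4$-cycle through it would require an edge of $\Gamma$ between a ``$-$''-vertex at position $x+1$ and a ``$+$''-vertex at position $x-1$; but matching edges join equal $\Lambda$-vertices, and cycle-type edges join ``$+$''-vertices at position $\xi$ to ``$-$''-vertices at position $\xi+1$, so no such edge exists for $r\geq 3$. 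A parallel case check shows that each cycle-type edge lies in exactly the one $4$-cycle identified above; hence $\cT$ is precisely the set of edges of $\Gamma$ lying in no $4$-cycle, and therefore $\Aut(\Gamma)$-invariant.

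Next, contracting the edges of $\cT$ in $\Gamma$ yields a simple graph isomorphic to $\Lambda$: each cycle-type edge $\{(v,+),(w,-)\}$ becomes the edge $\{v,w\}$ of $\Lambda$, and the same orientation argument rules out $\{(v,-),(w,+)\}$ being an edge of $\Gamma$, so no multi-edges appear. This gives a homomorphism $\varphi:\Aut(\Gamma)\to\Aut(\Lambda)=W$. To see that $\varphi$ is injective, suppose $g\in\ker\varphi$ and $g((v,+))=(v,-)$ for some $v$; choose any cycle-type neighbour $(w,-)$ of $(v,+)$. Then $g((w,-))$ must lie in $\{(w,+),(w,-)\}$ (since $g$ fixes this matching pair setwise) and must be adjacent to $g((v,+))=(v,-)$. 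But neither $(w,+)$ nor $(w,-)$ is adjacent to $(v,-)$ when $r\geq 3$, by the same position/sign check as above, a contradiction.

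Hence $\ker\varphi=1$, so $\Aut(\Gamma)\hookrightarrow W$, and combined with the inclusion $W\leq\Aut(\Gamma)$ already noted in Definition~\ref{defdef} this gives $\Aut(\Gamma)=W$. The only technical burden is the position/sign bookkeeping in the $4$-cycle count and the kernel check; these are routine but need care, and I expect no deeper obstacle. The hypothesis $r\geq 5$ enters only to invoke Praeger--Xu's theorem (which has a known exception at $r=4$); my local arguments themselves go through for all $r\geq 3$.
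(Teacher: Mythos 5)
Your proof is correct and follows essentially the same strategy as the paper: identify the $\Aut(\Gamma)$-invariant perfect matching coming from the split construction, pass to the quotient $\PX(2,r,s)$, show the induced action is faithful, and conclude from Praeger--Xu's theorem that $\Aut(\PX(2,r,s))=\ZZ_2^r\rtimes\D_r$. The only difference is that you carry out the matching-invariance (via the edges-in-no-$4$-cycle characterisation) and faithfulness checks by direct local computation, whereas the paper obtains them by citing the merge/split machinery of \cite[Lemma~9 and Theorem~10]{PSV1280}.
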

\begin{proof}
Let $\Gamma=\S(\PX(2,r,s))$, let $G=\Aut(\Gamma)$ and let $v$ be a vertex of $\Gamma$. Note that $\Gamma$ is not arc-transitive: some edges are contained in cycles of length four, others are not. 
Let $W=\ZZ_2\wr \D_r=\ZZ_2^r\rtimes\D_r$ act on $\Gamma$ as described in Definition~\ref{defdef}. Since $W\leq G$ and $W_v\neq 1$, it follows that $|G_v^{\Gamma(v)}|=2$.

 We follow the terminology from~\cite[Section~4.1]{PSV1280}. Let $\M(\Gamma,G)$  be as in~\cite[Construction~7]{PSV1280}. Then $\M(\Gamma,G)\cong\PX(2,r,s)$. By~\cite[Lemma~9]{PSV1280}, if $(\Gamma,G)$ is degenerate then every edge of $\Gamma$ is contained in a $4$-cycle, which is not the case. It follows that $(\Gamma,G)$ is not degenerate and thus, by~\cite[Theorem~10]{PSV1280}, $G$ acts faithfully as a group of automorphisms of $\M(\Gamma,G)$, that is, $G\leq\Aut(\M(\Gamma,G))\cong \Aut(\PX(2,r,s))$. By~\cite[Theorem~2.13]{PraegerXu}, $\Aut(\PX(2,r,s))=W$ and thus $W=G$.
\end{proof}

\begin{corollary}\label{PX case}
Let $r\geq 5$, let $1\leq s\leq r-1$ and let $G$ be a vertex-transitive group of automorphisms of $\S(\PX(2,r,s))$. Then $G$ contains a semiregular element of order at least $r$.
\end{corollary}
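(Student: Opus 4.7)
The plan is to locate inside $G$ a lift of a generator of the rotation subgroup of $\D_r$; any such lift will automatically be a semiregular element of order $r$ or $2r$. By Lemma~\ref{AutSplit}, we may view $G$ as a subgroup of $W := \ZZ_2^r \rtimes \D_r$ acting as in Definition~\ref{defdef}. Let $B = \ZZ_2^r$ be the base subgroup, let $\pi : W \to W/B \cong \D_r$ be the natural projection, and set $N = G \cap B$ and $\bar G = \pi(G)$, so that $G/N \cong \bar G \leq \D_r$.

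The central step is to prove that $\bar G = \D_r$. The key observation is that $B$ fixes the last two coordinates of every vertex $(n_0,\ldots,n_{s-1},x,\epsilon)$ of $\Gamma := \S(\PX(2,r,s))$, and consequently each $N$-orbit is contained in a fiber of the projection $\V(\Gamma) \to \ZZ_r \times \{+,-\}$. Since these fibers have size $2^s$, the number of $N$-orbits is at least $|\V(\Gamma)|/2^s = 2r$. But $\bar G = G/N$ acts transitively on the set of $N$-orbits (because $G$ is vertex-transitive), forcing $|\bar G| \geq 2r = |\D_r|$ and hence $\bar G = \D_r$.

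Now let $a \in \D_r$ denote the rotation $x \mapsto x+1$, and pick any $g = (g_0,\ldots,g_{r-1},a) \in G$ lifting $a$. Iterating the action from Definition~\ref{defdef} shows that $g^r$ has trivial $\D_r$-part and $B$-part equal to $(S,S,\ldots,S)$, where $S := \sum_{j=0}^{r-1} g_j \in \ZZ_2$. If $S = 0$ then $g^r = 1$ and $g$ has order $r$; if $S = 1$ then $g^r$ flips every $\ZZ_2$-coordinate of every vertex and so is fixed-point-free, and $g$ has order $2r$. In either case, a nontrivial power of $g$ fixing some vertex would have to have trivial $\D_r$-part, hence be a nontrivial power of $g^r$; since $g^r$ is either the identity or fixed-point-free, no such power exists. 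Therefore $g$ is a semiregular element of $G$ of order $r$ or $2r$, in both cases at least $r$.

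The main obstacle is the counting step that forces $\bar G = \D_r$; once that is in hand, the semiregularity of a lift of $a$ follows from a short wreath-product calculation.
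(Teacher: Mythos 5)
Your proof is correct and follows essentially the same route as the paper: identify $G$ inside $\ZZ_2^r\rtimes\D_r$ via Lemma~\ref{AutSplit}, use vertex-transitivity to show $G$ surjects onto $\D_r$ (the paper phrases this via the regular action of $\D_r$ on $\ZZ_r\times\{+,-\}$ rather than counting $N$-orbits, but it is the same observation), then lift an $r$-cycle and compute $g^r=(S,\ldots,S,1)$ to conclude semiregularity of order $r$ or $2r$. No gaps; your explicit justification that any vertex-fixing power of $g$ must be a power of $g^r$ is a welcome bit of extra detail the paper leaves implicit.
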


\begin{proof}
Let $\Gamma=\S(\PX(2,r,s))$. We use the definition of $\S(\PX(2,r,s))$ from Definition~\ref{defdef} so that $\V(\Gamma)=\ZZ_2^s\times \ZZ_r\times \{+,-\}$.  By Lemma~\ref{AutSplit} we have that $\Aut(\Gamma)=\ZZ_2^r\rtimes\D_r$. From Definition~\ref{defdef}, we see that the action of $\ZZ_2^r\rtimes \D_r$ on $\V(\Gamma) $ induces a regular action of $\D_r$ on $\ZZ_r\times \{+,-\}$. 

Let $\pi:\Aut(\Gamma)\to \D_r$ be the natural projection. Since $G$ acts transitively on $\V(\Gamma)$, we obtain that $G$ projects surjectively onto $\D_r$, that is, $\pi(G)=\D_r$. Therefore, $G$ contains an element $g=(g_0,\ldots,g_{r-1},h)$ with $g_0,\ldots,g_{r-1}\in \ZZ_2$ and $h$ an element of order $r$ in $\D_r$.  Clearly, $g$ has order a multiple of $r$ and a computation yields that $g^r=(x,\ldots,x,1)\in\ZZ_2^r\rtimes \D_r$ where $x=g_0+g_1+\cdots+g_{r-1}$. If $x=0$ then $g^r=1$ and $g$ is a semiregular element of order $r$. If $x=1$ then $g^r=(1,\ldots,1,1)$ is a semiregular involution and hence $g$ is semiregular of order $2r$.
\end{proof}

\section{Proof of Theorem~\ref{theorem:main1}}\label{sec:last}


\smallskip

\noindent\textbf{Theorem~\ref{theorem:main1}.}\emph{
There exists a function $f:\mathbb{N}\to \mathbb{N}$ satisfying $f(n)\rightarrow \infty$ as $n\rightarrow\infty$ such that, if $\Gamma$ is a connected $G$-vertex-transitive cubic graph of order $n$ then $G$ contains a semiregular subgroup of order at least $f(n)$.}

\begin{proof}
Our proof uses the abelian normal quotient method and Theorem~\ref{CubicPXu}. We argue by contradiction and hence we begin by assuming that there exists no such function $f$. This means that there exist a constant $c$ and an infinite family $\cF=\{(\Gamma_k,G_k)\}_{k\in\mathbb {N}}$, with $\Gamma_k$ a connected $G_k$-vertex-transitive cubic graph,  such that $\sup\{|\V(\Gamma_k)|\mid k\in\mathbb{N}\}=\infty$ and every semiregular subgroup of $G_k$ has order at most $c$.

For every $k$, let $M_k$ be a normal subgroup of $G_k$ of maximal cardinality subject to $\Gamma_k/M_k$ being cubic and let $\cF^*=\{(\Gamma_k/M_k,G_k/M_k)\}_{k\in\mathbb {N}}$.  Observe that $M_k$ coincides with the kernel of the action of $G_k$ on $M_k$-orbits and that $M_k$ is semiregular. In particular, $|M_k|\leq c$ and moreover, if $H_k/M_k$ is a semiregular subgroup of $G_k/M_k$ in its action on $\V(\Gamma_k/M_k)$, then $H_k$ is semiregular. It follows that $\Gamma_k/M_k$ is a connected $G_k/M_k$-vertex-transitive cubic graph such that $\sup\{|\V(\Gamma_k/M_k)|\mid k\in\mathbb{N}\}=\sup\{|\V(\Gamma_k)|/|M_k|\mid k\in\mathbb{N}\}=\infty$ and every semiregular subgroup of $G_k/M_k$ has order at most $c/|M_k|\leq c$. Replacing $\cF$ by $\cF^*$, we may thus assume that for every nontrivial normal subgroup $M_k$ of $G_k$, the normal quotient $\Gamma_k/M_k$ has valency less than three. 

Replacing $\cF$ by a subfamily, we may also assume that one of the following occurs:

\begin{enumerate}
\item for every $k$, $G_k$ has no nontrivial abelian normal subgroups; 
\item for every $k$, $G_k$ has an abelian normal subgroup that is not semiregular;
\item for every $k$, every abelian normal subgroup of $G_k$ is semiregular and $G_k$ has at least one such subgroup.
\end{enumerate}

\medskip
\noindent\textbf{Case 1.} For every $k$, $G_k$ has no nontrivial abelian normal subgroups. 
\smallskip

\noindent In this case, the socle of $G_{k}$ is a direct product of nonabelian simple groups, that is, $\soc (G_{k})= T_{k,1}\times \cdots\times T_{k,t_k}$, where $T_{k,1},\ldots,T_{k,t_k}$ are nonabelian simple groups. For every $k$ and $j\in \{1,\ldots,t_k\}$, by Burnside's Theorem there exists a prime $p_{k,j}\geq 5$ dividing $|T_{k,j}|$, and hence there exists $x_{k,j}\in T_{k,j}$ with $|x_{k,j}|=p_{k,j}$. Since the stabiliser of a vertex of $\Gamma_{k}$ is a $\{2,3\}$-group, we get that $H_{k}=\langle x_{k,1}\rangle\times\cdots\times\langle x_{k,t_k}\rangle$ is a semiregular subgroup of $G_{k}$ of order $\prod_{j}p_{k,j}\geq 5^{t_k}$. Thus $t_k\leq \log_5(c)$.

Using  the CFSG, it can be shown that there exists a function $g:\mathbb{N}\to \mathbb{N}$  satisfying $g(n)\rightarrow \infty$ as $n\rightarrow\infty$ such that if $T$ is a nonabelian simple group of  order $n$ then $T$ contains an element $t$ of order at least $g(n)$ and coprime to $6$ (see for example~\cite[Lemma~$3.5$]{SpigaCubic}). Since $T_{k,j}$ has no element of order larger than $c$ and coprime to $6$, we get $g(|T_{k,j}|)\leq c$. It follows that there exists a constant $b$ such that $|T_{k,j}|\leq b$ for every $k$ and $j\in \{1,\ldots,t_k\}$.

We have shown that $|\soc(G_{k})|\leq b^{\log_5(c)}$ for every $k$. As the action of $G_{k}$ on $\soc(G_{k})$ by conjugation is faithful, $G_{k}$ is isomorphic to a subgroup of $\Aut(\soc(G_{k}))$ and  hence, since $G_k$ is vertex-transitive, $|\V(\Gamma_k)| \leq |G_{k}|\leq |\Aut(\soc(G_{k}))|\leq (b^{\log_5(c)})!$. This contradicts the fact that $\sup\{|\V(\Gamma_k)|\mid k\in\mathbb{N}\}=\infty$.

\medskip

\noindent\textbf{Case 2.} For every $k$, $G_k$ has an abelian normal subgroup that is not semiregular.

\smallskip

\noindent  Replacing $\cF$ by a subfamily, we may assume that $|\V(\Gamma_k)|>32$ for every $k$. By Theorem~\ref{CubicPXu}, it follows that $\Gamma_{k}$ is isomorphic to $\S(\PX(2,r_k,s_k))$ for some $r_k\geq 5$ and $1\leq s_k\leq r_k-1$. Now, from Corollary~\ref{PX case} we get $r_k\leq c$ and hence $|\V(\Gamma_{k})|=2^{s_k}r_k\leq 2^{c-1}c$. This contradicts the fact that $\sup\{|\V(\Gamma_k)|\mid k\in\mathbb{N}\}=\infty$.

\medskip

\noindent\textbf{Case 3.} For every $k$, every abelian normal subgroup of $G_k$ is semiregular and $G_k$ has at least one such subgroup.

\smallskip

\noindent Replacing $\cF$ by a subfamily, we may assume that $|\V(\Gamma_k)|>2c$ for every $k$. Let $N_k$ be an abelian minimal normal subgroup of $G_k$. Note that $N_k$ is elementary abelian and semiregular and hence $|N_k|\leq c$. Since $|\V(\Gamma_k)|>2c$, it follows that $N_k$ has at least three orbits and, since $N_k\neq 1$, the graph $\Gamma_k/N_k$ has valency at most two and hence is a cycle of length $|\V(\Gamma_k)|/|N_k|\geq |\V(\Gamma_k)|/c$.

Let $K_k$ be the kernel of the action of $G_k$ on $N_k$-orbits and let $C_k$ be the centraliser of $N_k$ in $K_k$. As $N_k$ is abelian, we have $N_k\leq C_k$. Also, as $N_k$ and $K_k$ are normal in  $G_k$, so is $C_k$. Since $N_k$ is abelian and $K_k$ preserves the $N_k$-orbits setwise, we must have $C_k^\Delta=N_k^\Delta$ for each $N_k$-orbit $\Delta$. It follows that the commutator $[C_k,C_k]$ fixes each $N_k$-orbit pointwise and hence $[C_k,C_k]=1$. Thus $C_k$ is abelian and hence semiregular. For $v\in \V(\Gamma_k)$, we have $K_k=N_k(K_k)_v$. As $N_k\leq C_k\leq K_k$, this implies that $C_k=N_k$, that is, $\cent {K_k}{N_k}=N_k$. 

Since $|N_k|\leq c$, we have $|G_k:\cent {G_k}{N_k}|\leq |\Aut(N_k)|\leq c!$. Thus $|G_k/K_k:K_k\cent {G_k}{N_k}/K_k|\leq c!$. Recall that $G_k/K_k$ acts faithfully and vertex-transitively on the cycle $\Gamma_k/N_k$ and thus contains a $2$-step rotation. Since $|G_k/K_k:K_k\cent {G_k}{N_k}/K_k|\leq c!$, it follows that $\cent {G_k}{N_k}$ contains an element $g_k$ acting as an $\ell_k$-step rotation of $\Gamma_k/N_k$ with $\ell_k\leq (2c!)$. Now, $g_k^{\ell_k}\in K_k\cap \cent {G_k}{N_k}=\cent {K_k}{N_k}=N_k$ and hence $g_k^{\ell_k}$ is semiregular, and so is $g_k$.  It follows that $\langle g_k\rangle$ is a semiregular subgroup of $G_k$ of order at least $|\V(\Gamma_k/N_k)|/(2c!)\geq|\V(\Gamma_k)|/(2cc!)$. Since $\sup\{|\V(\Gamma_k)|\mid k\in\mathbb{N}\}=\infty$, this is our final contradiction.
\end{proof}

\thebibliography{99}

\bibitem{Cameron} P.~Cameron (ed.), Problems from the Seventeenth British Combinatorial Conference, \textit{Discrete Math.} \textbf{231} (2001), 469--478.

\bibitem{CGJKKMN}  P.~Cameron, M.~Giudici, G.~Jones, W.~Kantor, M.~Klin, D.~Maru\v{s}i\v{c}, L.~A.~Nowitz, Transitive permutation groups without semiregular subgroups, \textit{J. London Math. Soc.} \textbf{66} (2002), 325--333.

\bibitem{CSS} P.~Cameron,  J.~Sheehan, P.~Spiga, Semiregular automorphisms of vertex-transitive cubic graphs, \textit{European J. Combin.} \textbf{27} (2006), 924--930.

\bibitem{Dobson}  E.~Dobson, A.~Malni\v{c}, D.~Maru\v{s}i\v{c}, L.~A.~Nowitz, Semiregular automorphisms of vertex-transitive graphs of certain valencies, \textit{J.\ Combin.\ Theory, Ser.\ B} \textbf{97} (2007), 371--380.

\bibitem{GLP} M.~Giudici, C.~H.~Li, C.~E.~Praeger, Analysing finite locally $s$-arc transitive graphs, \textit{Trans. Amer. Math. Soc.} \textbf{356} (2004), 291--317.

\bibitem{GXu} M.~Giudici, J.~Xu, All vertex-transitive locally-quasiprimitive graphs have a semiregular automorphism, \textit{J. Algebraic Combin.} \textbf{25} (2007), 217--232.

\bibitem{Li} C.~H.~Li, Semiregular automorphisms of cubic vertex-transitive graphs, \textit{Proc. Amer. Math. Soc.} \textbf{136} (2008), 1905--1910.

\bibitem{CycleDec} \v{S}.~Miklavi\v{c}, P.~Poto\v{c}nik, S.~Wilson, Arc-transitive cycle decompositions of tetravalent graphs, \textit{J. Combin. Theory Ser. B} \textbf{98} (2008), 1181--1192.

\bibitem{PSV4valent} P.~Poto\v{c}nik, P.~Spiga, G.~Verret, Bounding the order of the vertex-stabiliser in $3$-valent vertex-transitive and $4$-valent arc-transitive graphs, arXiv:1010.2546v1 [math.CO].

\bibitem{PSV1280}P.~Poto\v{c}nik, P.~Spiga, G.~Verret, Cubic vertex-transitive graphs on up to $1280$ vertices, \textit{J. Symbolic Comput.} {\bf 50} (2013), 465--477.

\bibitem{PNQ}C.~E.~Praeger, Imprimitive symmetric graphs, \textit{Ars Combin.} \textbf{19 A} (1985), 149--163.

\bibitem{Praeger} C. E. Praeger, An O'Nan-Scott theorem for finite quasiprimitive permutation groups and an application to $2$-arc transitive graphs, \textit{J. London Math. Soc.} \textbf{47} (1993), 227--239.

\bibitem{PraegerXu}C.~E.~Praeger, M.~Y.~Xu, A Characterization of a Class of Symmetric Graphs of Twice Prime Valency, \textit{European J. Combin.} \textbf{10} (1989), 91--102.

\bibitem{SpigaCubic}P.~Spiga, Semiregular elements in cubic vertex-transitive graphs and the restricted Burnside problem, arXiv:1211.7335 [math.CO].

\bibitem{genlost} P.~Spiga, G.~Verret, On the order of vertex-stabilisers in vertex-transitive graphs with local group $\C_p\times \C_p$ or $\C_p\wr\C_2$, arXiv:1311.4308 [math.CO].

\bibitem{Verret2} G.~Verret, On the order of arc-stabilisers in arc-transitive graphs, II, \textit{Bull. Austral. Math. Soc.} \textbf{87} (2013), 441--447.

\bibitem{8valent} G.~Verret, Arc-transitive graphs of valency $8$ have a semiregular automorphism, \textit{Ars Math. Contemp.}, accepted.
\end{document}